\newtheorem{thm}{Theorem}[section]
\newtheorem{cor}[thm]{Corollary}
\newtheorem{exam}[thm]{Example}
\numberwithin{equation}{section}
\begin{document}

\title{the group invertibility of matrices over B\'ezout domains}

\author{Dayong Liu$^*$}
\author{Aixiang Fang}
\address{
College of Science \\ Central South University of Forestry and Technology  \\ Changsha , China}
\email{<liudy@csuft.edu.cn>}
\address{
College of Mathematics and Physics \\ Hunan University of Arts and Science \\ Changde,  China}
\email{<fangaixiangwenli@163.com>}

\subjclass[2010]{15A09, 16E50, 16U90.} \keywords{Flanders' theorem; group inverse; B\'ezout domain.}

\begin{abstract}
Let $R$ be a B\'ezout domain, and let $A,B,C\in R^{n\times n}$ with $ABA=ACA$. If $AB$ and $CA$ are group invertible, we prove that $AB$ is similar to $CA$.
Moreover, we have $(AB)^{\#}$ is similar to $(CA)^{\#}$. This generalize the main result of Cao and Li(Group inverses for matrices over a B\'ezout domain, {\it Electronic J. Linear Algebra}, {\bf 18}(2009), 600--612). \end{abstract}

\thanks{Corresponding author: Dayong Liu}

\maketitle

\section{Introduction}

Let $R$ be a associate ring with an identity 1. An element $a\in R$ is Drazin invertible if there exists an element $x\in R$
such that
$$ ax=xa , \ \ xax=x , \ \ a^{k+1}x=a^{k}$$
for some nonnegative integer $k$. The preceding $x$ is unique if it exists. The element $x$ is called Drazin inverse of $a$, and denote $x$ by $a^D$.
The least nonnegative integer $k$ satisfying these equations above is the Drazin index $ind(a)$ of $a$. When $ind(a)=1$, we say that $a$ is group invertible, that is
$$ax=xa, x=xax, a=axa.$$
Denote $x$ by $a^{\#}$. We use
$R^{\#}$ to stand for the set of all group invertible elements in $R$.

Two elements $a, b \in R$  are similar, written $a\sim b$, if there exists an invertible element $s$ such that $a=s^{-1}bs$.

It is important for Similarity to solute linear equations. However, it is difficulty to consider similarity for usual ring.
In view of this, we will investigate the similarity over B\'ezout domain. An integral domain is called a B\'ezout domain if every its finitely generated ideal is principal.
Some authors have discussed the similarity of products of matrices on background of generalized inverse. For example,
in \cite[Theorem 3.6]{CL}, Cao and Li proved that $(AB)^{\#}$ and $(BA)^{\#}$ exist implies that $AB\sim BA$ for any $n\times n$ matrices $A$ and $B$ over a B\'ezout domain $R$.

This paper is partially motivated by Flanders' a classic theorem which states that the elementary divisors of $AB$ which do not have zero as a root coincide with those of $BA$(see \cite[Theorem 2]{F}).
In \cite[Theorem 2]{HARTWIG}, for strongly $\pi$-regular ring $R$, Hartwig proved that $R$ is unit-regular if and only if $R$ is regular and $(ab)^D\sim (ba)^D$ for every $a, b\in R$.

In this paper, we shall improve Flanders, Cao and Li's results.
In Section 2, we present a new characterization of the similarity. Under the condition $ABA=ACA$, we prove that $AB$ and $CA$ are similar if $AB$ and $CA$ are both group invertible for any $n\times n$ matrices $A, B$ and $C$ over a B\'ezout domain $R$. An example is provided to illustrate that our result is a nontrivial generalization of \cite[Theorem 3.6]{CL}. In Section 3, we turn to the similarity of the group inverses of products of matrices over B\'ezout domains. It is shown that $(AB)^{\#}$ and $(CA)^{\#}$ are similar if $AB$ and $CA$ are both group invertible for any $n\times n$ matrices $A, B$ and $C$ over a B\'ezout domain $R$.

Throughout this paper, $R$ is a B\'ezout domain. $R^{m\times n}$ denotes the set of all $m\times n$ matrices over $R$. Let $A\in R^{m\times n}$,
then $R_{r}(A)$ is the space spanned by the columns of $A$:
$$R_{r}(A)=\{Ax|x\in R^{n\times 1}\}\subseteq R^{m\times 1}.$$
Likewise, $R_{l}(A)$ is the space spanned by the rows of $A$:
$$R_{l}(A)=\{yA|y\in R^{1\times m}\}\subseteq R^{1\times n}.$$
The column rank (respectively, row rank) of $A$ is defined as the dimension of $R_{r}(A)$(respectively, $R_{l}(A)$).
The inner rank of $A$ is determined as the least $r$ such that $A=BC$, where $B\in R^{m\times r}, C\in R^{r\times n}$.
Over a B\'ezout domain, it is proved that the column rank, row rank and inner rank of any matrix $A$ coincide with each other(see \cite[Proposition 2.3.4]{HUANG} and \cite[Page 245]{COHN}), the common number is called the rank of $A$, written $rank(A)$. Define the rank of a zero matrix is 0. The rank of $n\times n$ non-singular matrix over $R$ is $n$.
An $n\times n$ matrix $A$ over a B\'ezout domain $R$ is invertible in case $AB=I_{n}=BA$ for an $n\times n$ matrix $B$. Obviously, $B$ is unique if it exists, denote $A^{-1}$.

\section{the similarity of product of matrices}

The purpose of this section is concerned with the similarity of product of matrices over B\'ezout domains. We now derive

\begin{thm}\label{thm1}
Let $R$ be a B\'ezout domain, and let $A,B,C\in R^{n\times n}$ with $ABA=ACA$. If $AB$ and $CA$ are group invertible,
then $AB$ is similar to $CA$.
\end{thm}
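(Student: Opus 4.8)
The plan is to pass from the hypothesis $ABA=ACA$ to the single intertwining relation $(AB)A=A(CA)$ and then to build an explicit invertible $S\in R^{n\times n}$ with $(AB)S=S(CA)$. Write $P=AB$ and $Q=CA$, so the hypothesis becomes $PA=AQ$. Since $P$ and $Q$ are group invertible, set $F=PP^{\#}$ and $E=QQ^{\#}$; these are idempotents giving direct sum decompositions $R^{n\times1}=\operatorname{im}F\oplus\ker P$ and $R^{n\times1}=\operatorname{im}E\oplus\ker Q$, where $\operatorname{im}F=R_{r}(P)$, $\operatorname{im}E=R_{r}(Q)$, $\ker P=\operatorname{im}(I-F)$ and $\ker Q=\operatorname{im}(I-E)$. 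On the cores $R_{r}(P)$ and $R_{r}(Q)$ the operators $P$ and $Q$ act invertibly (with inverses induced by $P^{\#}$ and $Q^{\#}$, which preserve the cores), while $P$ and $Q$ annihilate the respective complements $\ker P$ and $\ker Q$.

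The heart of the argument is to show that $A$ restricts to an $R$-module isomorphism $R_{r}(Q)\to R_{r}(P)$, with $C$ supplying the inverse. First, $A$ sends $R_{r}(Q)$ into $R_{r}(P)$: if $v=Qu$ then $Av=AQu=PAu\in R_{r}(P)$ by the intertwining relation $AQ=PA$. Dually, $C$ sends $R_{r}(P)$ into $R_{r}(Q)$, since for $w=Ps$ one has $Cw=C(ABs)=(CA)(Bs)=Q(Bs)\in R_{r}(Q)$. I would then evaluate the two composites on the cores: from $CA=Q$ the composite $R_{r}(Q)\to R_{r}(P)\to R_{r}(Q)$ equals $Q|_{R_{r}(Q)}$, and the relation $ACA=ABA$ gives, for $w=Ps$, that $ACw=(ACA)Bs=(ABA)Bs=P^{2}s=Pw$, so the composite $R_{r}(P)\to R_{r}(Q)\to R_{r}(P)$ equals $P|_{R_{r}(P)}$. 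As both $Q|_{R_{r}(Q)}$ and $P|_{R_{r}(P)}$ are invertible, $A|_{R_{r}(Q)}$ is simultaneously injective and surjective, hence an isomorphism $R_{r}(Q)\to R_{r}(P)$. This is the main obstacle: the intertwining $PA=AQ$ by itself does not even force $rank(AB)=rank(CA)$, and it is precisely the interplay of \emph{both} group inverses with the full hypothesis, packaged as the two composite identities above, that pins $A$ down to an isomorphism of cores.

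Finally I would assemble the similarity and invoke the B\'ezout hypothesis. Because $A|_{R_{r}(Q)}$ is an isomorphism $R_{r}(Q)\cong R_{r}(P)$ and $R$ is a B\'ezout domain, these finitely generated torsion-free modules are free of a common rank $r$; consequently the complements $\ker Q$ and $\ker P$ are free of the common rank $n-r$, hence isomorphic. Choosing any isomorphism $T:\ker Q\to\ker P$ and declaring $S=A|_{R_{r}(Q)}$ on $\operatorname{im}E=R_{r}(Q)$ and $S=T$ on $\operatorname{im}(I-E)=\ker Q$ defines, relative to the two decompositions, a matrix $S\in R^{n\times n}$ that is invertible. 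A check on each summand then yields $SQ=PS$: on $R_{r}(Q)$ one has $SQv=A(Qv)=PAv=PSv$ using $AQ=PA$, while on $\ker Q$ both sides vanish since $Qv=0$ and $Sv\in\ker P$. Hence $S^{-1}(AB)S=CA$, giving $AB\sim CA$. Converting the module isomorphism of cores into a genuine invertible matrix is exactly where freeness of finitely generated torsion-free modules over a B\'ezout domain is used.
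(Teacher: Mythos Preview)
Your argument is correct and takes a genuinely different route from the paper. The paper proceeds by explicit block-matrix normal forms: it invokes a decomposition lemma (Cao--Li, Lemma~3.5) to write $A$, $B$, $C$ in block form, then through several further reductions shows $AB\sim\operatorname{diag}(C_4D,0)$ and $CA\sim\operatorname{diag}(D'C'_4,0)$ with $C_4D$ and $D'C'_4$ invertible, and finally uses $ABA=ACA$ in these coordinates to match the invertible blocks. Your proof is module-theoretic and coordinate-free: you read $ABA=ACA$ as the intertwining $PA=AQ$, use the two composite identities $CA|_{R_r(Q)}=Q|_{R_r(Q)}$ and $AC|_{R_r(P)}=P|_{R_r(P)}$ to pin $A$ down to an isomorphism of cores, and then patch in an arbitrary isomorphism of the complementary kernels. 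The B\'ezout hypothesis enters for you only at the end, to guarantee that the projective summands $\operatorname{im}(I-F)$ and $\operatorname{im}(I-E)$ are free of the common rank $n-r$ (note that the cleanest justification is that they are direct summands of $R^n$, hence projective, and finitely generated projectives over a B\'ezout domain are free; the torsion-free route you mention also works but requires the Pr\"ufer characterisation). Your approach is shorter and makes the role of group invertibility transparent; the paper's coordinate computation, while heavier, has the advantage that the explicit invertible blocks $C_4D$ and $D'C'_4$ are reused verbatim in the later results on $(AB)^{\#}$ and $(AB)(AB)^{\#}$.
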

\begin{proof}
For $A,B,C\in R^{n\times n}$, by \cite[Lemma 3.5]{CL}, there exist invertible matrices $P$ and $Q$ such that
$$A=P\begin{bmatrix}
       A_1 & 0 \\
       0   & 0 \\
     \end{bmatrix}Q   \ and \
B=Q^{-1}\begin{bmatrix}
          B_1 & B_2 & B_3 \\
          0   & 0   & 0   \\
          C_1 & C_2 & C_3 \\
          0   & 0   & 0   \\
        \end{bmatrix}P^{-1},$$
also there exists an invertible matrix $Q'$ together with the invertible matrix $P$ above such that
$$A=P\begin{bmatrix}
       A'_1 & 0 \\
       0   & 0 \\
     \end{bmatrix}Q'   \ and \
C=Q'^{-1}\begin{bmatrix}
          B'_1 & B'_2 & B'_3 \\
          0   & 0   & 0   \\
          C'_1 & C'_2 & C'_3 \\
          0   & 0   & 0   \\
        \end{bmatrix}P^{-1},$$
where $A_1, A'_1 \in R^{r\times r}$, $Q=diag(Q_1, Q_2)N$, $Q'=diag(Q'_1, Q'_2)N$, $A_1Q_1=\Delta=A'_1Q'_1\in R^{r\times r}$, $rank\Delta=rankA_1=rankA'_1=rankA=r$,
$Q_1, Q_2, Q'_1, Q'_2$ and $N$ are invertible, $B_1, B'_1\in R^{s\times s}$, $C_1, C'_1\in R^{t\times s}$.
Then
$$AB=P\begin{bmatrix}
\vspace{2mm}
        A_1\begin{bmatrix}
             B_1 & B_2 \\
             0   & 0   \\
           \end{bmatrix}   &    A_1\begin{bmatrix}
                                           B_3    \\
                                           0      \\
                                   \end{bmatrix}   \\
            0              &    0                  \\
      \end{bmatrix}P^{-1}$$
and
$$CA=Q'^{-1}\begin{bmatrix}
\vspace{2mm}
               \begin{bmatrix}
                 B'_1 & B'_2 \\
                 0   & 0   \\
               \end{bmatrix}A'_1   &   0      \\
               \begin{bmatrix}
                 C'_1 & C'_2 \\
                 0   & 0   \\
               \end{bmatrix}A'_1   &   0      \\
      \end{bmatrix}Q'.$$
By \cite[Corollary 2.4 and 2.5]{CL}, we get
\begin{center}
$(AB)^{\#}$ exists $\Leftrightarrow$
$\left(A_1\begin{bmatrix}
             B_1 & B_2 \\
             0   & 0   \\
           \end{bmatrix}\right)^{\#}$ exists, and
\end{center}
\begin{center}
$R_{r}\left(A_1\begin{bmatrix}
                B_3    \\
                 0      \\
              \end{bmatrix}\right)\subseteq
R_{r}\left(A_1\begin{bmatrix}
             B_1 & B_2 \\
             0   & 0   \\
           \end{bmatrix}\right).$
\end{center}
\begin{center}
$(CA)^{\#}$ exists $\Leftrightarrow$
$\left(\begin{bmatrix}
        B'_1 & B'_2 \\
        0   & 0   \\
        \end{bmatrix}A'_1\right)^{\#}$ exists, and
\end{center}
\begin{center}
$R_{l}\left(\begin{bmatrix}
             C'_1 & C'_2 \\
              0   & 0   \\
            \end{bmatrix}A'_1\right)\subseteq
R_{l}\left(\begin{bmatrix}
           B'_1 & B'_2 \\
            0   & 0   \\
           \end{bmatrix}A'_1\right).$
\end{center}
Therefore, there exist $E, F$ such that
$$A_1\begin{bmatrix}
                B_3    \\
                 0      \\
              \end{bmatrix}
       =A_1\begin{bmatrix}
             B_1 & B_2 \\
             0   & 0   \\
           \end{bmatrix}E,
\begin{bmatrix}
C'_1 & C'_2 \\
 0   & 0   \\
\end{bmatrix}A'_1
=F\begin{bmatrix}
  B'_1 & B'_2 \\
   0   & 0   \\
\end{bmatrix}A'_1.$$
Then
$$AB=P
\begin{bmatrix}
  I & -E \\
  0 &  I \\
\end{bmatrix}
\begin{bmatrix}
\vspace{2mm}
        A_1\begin{bmatrix}
             B_1 & B_2 \\
             0   & 0   \\
           \end{bmatrix}   &    0   \\
            0              &    0   \\
      \end{bmatrix}
\begin{bmatrix}
  I &  E \\
  0 &  I \\
\end{bmatrix}P^{-1}, $$
and
$$CA=Q'^{-1}
\begin{bmatrix}
  I & 0 \\
  F & I \\
\end{bmatrix}
\begin{bmatrix}
\vspace{2mm}
\begin{bmatrix}
B'_1 & B'_2 \\
 0   & 0   \\
\end{bmatrix}A'_1   &    0   \\
            0       &    0   \\
      \end{bmatrix}
\begin{bmatrix}
  I &  0 \\
 -F &  I \\
\end{bmatrix}Q'. $$
Let
$$\begin{bmatrix}
B_1 & B_2 \\
 0  & 0   \\
\end{bmatrix}
=R\begin{bmatrix}
D & 0   \\
0 & 0   \\
\end{bmatrix}S, \
\begin{bmatrix}
B'_1 & B'_2 \\
 0   & 0   \\
\end{bmatrix}
=R'\begin{bmatrix}
D' & 0   \\
0 & 0   \\
\end{bmatrix}S'$$
for some invertible matrices $R, S, R', S'$, where $D\in R^{r_1\times r_1}$, $D'\in R^{s_1\times s_1}$, $rankD=r_1, rankD'=s_1$.
We obtain
$$A_1
\begin{bmatrix}
B_1 & B_2 \\
0   & 0   \\
\end{bmatrix}
= A_1 R
\begin{bmatrix}
D & 0   \\
0 & 0   \\
\end{bmatrix}S, \
\begin{bmatrix}
B'_1 & B'_2 \\
 0   & 0   \\
\end{bmatrix}A'_1
=
R'\begin{bmatrix}
D' & 0   \\
0 & 0   \\
\end{bmatrix}S'A'_1.$$
Let $SA_1 R=C=\begin{bmatrix}
                C_4 & C_5 \\
                C_6 & C_7 \\
              \end{bmatrix}$,
$S'A'_1 R'=C'=\begin{bmatrix}
                C'_4 & C'_5 \\
                C'_6 & C'_7 \\
              \end{bmatrix}$. Then $rankC=rankC'=r, $
and
$$A_1
\begin{bmatrix}
B_1 & B_2 \\
0   & 0   \\
\end{bmatrix}
=S^{-1}
\begin{bmatrix}
C_4 D & 0   \\
C_6 D & 0   \\
\end{bmatrix}S,$$
$$\begin{bmatrix}
B'_1 & B'_2 \\
 0   & 0   \\
\end{bmatrix}A'_1
=
R'\begin{bmatrix}
D'C'_4 & D'C'_5   \\
0      & 0   \\
\end{bmatrix}R'^{-1}.$$
Since $\left(A_1
\begin{bmatrix}
B_1 & B_2 \\
0   & 0   \\
\end{bmatrix}\right)^{\#}$ exists,
$\begin{bmatrix}
C_4 D & 0   \\
C_6 D & 0   \\
\end{bmatrix}$ is group invertible. Furthermore, $(C_4 D)^{\#}$ exists and $R_{l}(C_6 D)\subset R_{l}(C_4 D)$, then $C_{6}D=GC_4 D$ for some matrix $G$. Hence
$$\begin{bmatrix}
C_4 D & 0   \\
C_6 D & 0   \\
\end{bmatrix}
=
\begin{bmatrix}
I  &  0   \\
G  &  I   \\
\end{bmatrix}
\begin{bmatrix}
C_4 D & 0   \\
0     & 0   \\
\end{bmatrix}
\begin{bmatrix}
 I  &  0   \\
-G  &  I   \\
\end{bmatrix},$$
$$ rank(C_4)=rank(C_4 D)=r_1. $$
Therefore,
$$A_1
\begin{bmatrix}
B_1 & B_2 \\
0   & 0   \\
\end{bmatrix}
\sim
\begin{bmatrix}
C_4D & 0   \\
0    & 0   \\
\end{bmatrix}.$$
Analogously, $D'C'_4$ is group invertible, and
$$\begin{bmatrix}
B'_1 & B'_2 \\
 0   & 0   \\
\end{bmatrix}A'_1
\sim
\begin{bmatrix}
D'C'_4 & 0   \\
0      & 0   \\
\end{bmatrix},$$
$$ rank(C'_4)=rank(D'C'_4)=s_1. $$
By $(C_4 D)^{\#}$ exists and $rank(C_4)=rank(D)=r_1$, there exists $X$ such that
$C_4 DXC_4 D=C_4 D$, hence $DXC_4 D=D$ and $C_4 DXC_4=C_4$. Since $rank(C_4)=rank(D)=r_1$, we get $DXC_4=I=XC_4D$ and $C_4 DX=I=DXC_4$, which means $C_4$ and $D$ are invertible.
Thus $C_4D$ is invertible.
Likewise, $C'_4$, $D'$ and $D'C'_4$ are also invertible. \\
\indent
On the other hand, we have\\
$\begin{array}{rcl}
\vspace{2mm}
ABA  & = &
P
\begin{bmatrix}
  I & -E \\
  0 &  I \\
\end{bmatrix}
\begin{bmatrix}
        A_1\begin{bmatrix}
             B_1 & B_2 \\
             0   & 0   \\
           \end{bmatrix}   &    0   \\
            0              &    0   \\
      \end{bmatrix}
\begin{bmatrix}
  I &  E \\
  0 &  I \\
\end{bmatrix}P^{-1}
P\begin{bmatrix}
       A_1 & 0 \\
       0   & 0 \\
     \end{bmatrix}Q     \\
\vspace{2mm}
  &  =  &
P
\begin{bmatrix}
\vspace{2mm}
        A_1\begin{bmatrix}
             B_1 & B_2 \\
             0   & 0   \\
           \end{bmatrix}A_1   &    0   \\
            0                 &    0   \\
      \end{bmatrix}
      \begin{bmatrix}
             Q_1 &   0     \\
             0   &   Q_2   \\
           \end{bmatrix}N   \\
&   =   &
P
\begin{bmatrix}
\vspace{2mm}
        S^{-1}
        \begin{bmatrix}
             I   &    0   \\
             G   &    I   \\
        \end{bmatrix}
        \begin{bmatrix}
             C_4D   &    0   \\
             0      &    0   \\
        \end{bmatrix}
        \begin{bmatrix}
             I   &    0   \\
            -G   &    I   \\
        \end{bmatrix}SA_1Q_1     &    0   \\
                      0          &    0   \\
\end{bmatrix}N
\end{array}$   \\
and   \\
$\begin{array}{rcl}
\vspace{2mm}
ACA  &  =  &
P
\begin{bmatrix}
  A'_1 &  0 \\
  0    &  0 \\
\end{bmatrix}Q'
Q'^{-1}
\begin{bmatrix}
  I & 0 \\
  F & I \\
\end{bmatrix}
\begin{bmatrix}
\vspace{2mm}
\begin{bmatrix}
B'_1 & B'_2 \\
 0   & 0   \\
\end{bmatrix}A'_1   &    0   \\
            0       &    0   \\
      \end{bmatrix}
\begin{bmatrix}
  I &  0 \\
 -F &  I \\
\end{bmatrix}Q'   \\
\vspace{2mm}
    &   =   &
P
\begin{bmatrix}
\vspace{2mm}
        A'_1\begin{bmatrix}
             B'_1 & B'_2    \\
             0    & 0       \\
           \end{bmatrix}A'_1   &    0   \\
            0                  &    0   \\
      \end{bmatrix}
      \begin{bmatrix}
             Q'_1   &   0     \\
             0      &   Q'_2   \\
           \end{bmatrix}N   \\
\vspace{2mm}
   &   =   &
P
\begin{bmatrix}
\vspace{2mm}
        A'_1R'
           \begin{bmatrix}
             D'    &   0       \\
             0     &   0       \\
           \end{bmatrix}S'A'_1Q'_1   &    0   \\
            0                        &    0   \\
      \end{bmatrix}N   \\
\vspace{2mm}
   &   =   &
P
\begin{bmatrix}
\vspace{2mm}
        S'^{-1}
           \begin{bmatrix}
             C'_4D'    &   0       \\
             C'_6D'       &   0       \\
           \end{bmatrix}S'A'_1Q'_1   &    0   \\
            0                        &    0   \\
      \end{bmatrix}N
\end{array} $   \\
$\begin{array}{rcl}
   &   =   &
P
\begin{bmatrix}
\vspace{2mm}
        S'^{-1}
        \begin{bmatrix}
             I                 &   0       \\
             C'_6{C'_4}^{-1}     &   I       \\
           \end{bmatrix}
           \begin{bmatrix}
             C'_4D'    &   0       \\
             0         &   0       \\
           \end{bmatrix}
           \begin{bmatrix}
             I                  &   0       \\
             -C'_6{C'_4}^{-1}     &   I       \\
           \end{bmatrix}S'A'_1Q'_1   &    0   \\
            0                        &    0   \\
     \end{bmatrix}N.   \\
\end{array} $   \\
In view of $ABA=ACA$ and the invertibility of $P$ and $N$,  we get  \\
\noindent
$S^{-1}
        \begin{bmatrix}
             I   &    0   \\
             G   &    I   \\
        \end{bmatrix}
        \begin{bmatrix}
             C_4D   &    0   \\
             0      &    0   \\
        \end{bmatrix}
        \begin{bmatrix}
             I   &    0   \\
            -G   &    I   \\
        \end{bmatrix}S$   \\
\indent\hfill{
$=
S'^{-1}
        \begin{bmatrix}
             I                 &   0       \\
             C'_6{C'_4}^{-1}     &   I       \\
           \end{bmatrix}
           \begin{bmatrix}
             C'_4D'    &   0       \\
             0         &   0       \\
           \end{bmatrix}
           \begin{bmatrix}
             I                  &   0       \\
             -C'_6{C'_4}^{-1}     &   I       \\
           \end{bmatrix}S' . $}  \\
\noindent
It follows that
$$\begin{bmatrix}
C_4D   &    0   \\
0      &    0   \\
\end{bmatrix}
\sim
\begin{bmatrix}
C'_4D'   &    0   \\
0        &    0   \\
\end{bmatrix},$$
then
$$C_4D\sim C'_4D'=C'_4D'C'_4{C'_4}^{-1}.$$
Therefore, $$C_4 D\sim D'C'_4. $$
That is,
$$\begin{bmatrix}
C_4D   &    0   \\
0      &    0   \\
\end{bmatrix}
\sim
\begin{bmatrix}
D'C'_4   &    0   \\
0        &    0   \\
\end{bmatrix}.$$
We have the conclusion $AB\sim CA $.
\end{proof}

\begin{cor}\label{cor1}
Let $A, B, C\in R^{n\times n}$ and $ABA=ACA$. If $R_{r}(A)=R_{r}(ABA)$, then $AB\sim CA$.
\end{cor}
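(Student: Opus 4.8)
The plan is to deduce the two hypotheses of Theorem~\ref{thm1}---that $AB$ and $CA$ are both group invertible---from the single column-space condition $R_{r}(A)=R_{r}(ABA)$, and then to quote Theorem~\ref{thm1}. The engine I would use is the criterion (the criterion for group invertibility over a B\'ezout domain, cf. \cite{CL} and the block reduction already used in the proof of Theorem~\ref{thm1}) that for a square matrix $M$ the group inverse $M^{\#}$ exists precisely when $R_{r}(M)=R_{r}(M^{2})$. Thus the whole corollary reduces to verifying this one equality for $M=AB$ and for $M=CA$.

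First I would record the column-space identities. Since $ABA=ACA$ and $R_{r}(A)=R_{r}(ABA)$, we also have $R_{r}(A)=R_{r}(ACA)$. The equality $R_{r}(ABA)=R_{r}(A)$ means every column of $A$ lies in the column space of $ABA$, so $A=ABAK$ for some $K\in R^{n\times n}$. Substituting this identity into its own right-hand side gives $A=AB(ABAK)K=(AB)^{2}AK^{2}$, and multiplying on the right by $B$ yields $AB=(AB)^{2}(AK^{2}B)$; hence $R_{r}(AB)\subseteq R_{r}((AB)^{2})$, and as the reverse inclusion is automatic, $R_{r}(AB)=R_{r}((AB)^{2})$. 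For $CA$ the argument is the mirror image: from $R_{r}(A)=R_{r}(ACA)$ I write $A=ACAK'$ and multiply on the \emph{left} by $C$, obtaining $CA=CACAK'=(CA)^{2}K'$, so that $R_{r}(CA)=R_{r}((CA)^{2})$.

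It then remains to invoke the criterion, and this is the step I expect to be the heart of the matter: proving that $R_{r}(M)=R_{r}(M^{2})$ forces $M^{\#}$ to exist over a B\'ezout domain. I would argue module-theoretically on $R^{n\times 1}$. Equality of column spaces gives $\mathrm{rank}(M)=\mathrm{rank}(M^{2})$, whence $\ker M=\ker M^{2}$, since both kernels are saturated submodules of the same rank with one contained in the other. Because $R^{n\times 1}/\ker M$ is a finitely generated torsion-free module over a B\'ezout domain it is free, so $\ker M$ is a direct summand; meanwhile $R_{r}(M)=R_{r}(M^{2})$ says that $M$ maps $R_{r}(M)$ onto itself, so $M$ restricts to a surjective---hence bijective---endomorphism of the finitely generated free module $R_{r}(M)$, whose determinant is therefore a unit. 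Transporting this determinant along the canonical isomorphism $R^{n\times 1}/\ker M\cong R_{r}(M)$ shows that the induced injection $\overline{M}\colon R^{n\times 1}/\ker M\to R^{n\times 1}/\ker M$ is an isomorphism, which is exactly the statement $R_{r}(M)+\ker M=R^{n\times 1}$; together with $R_{r}(M)\cap\ker M=0$ (immediate from injectivity of $M$ on $R_{r}(M)$) this yields the splitting $R^{n\times 1}=\ker M\oplus R_{r}(M)$ on which $M$ acts invertibly, so $M$ is group invertible. Applying this to $M=AB$ and $M=CA$ makes both group inverses exist, and Theorem~\ref{thm1} then delivers $AB\sim CA$. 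The only delicate inputs are the freeness of finitely generated torsion-free modules and the principle that a surjective endomorphism of a finitely generated module is injective; both hold over a B\'ezout domain, so beyond this bookkeeping no genuine obstruction remains.
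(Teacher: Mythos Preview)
Your proposal is correct and follows the same overall strategy as the paper: deduce that $(AB)^{\#}$ and $(CA)^{\#}$ exist from the hypothesis $R_r(A)=R_r(ABA)=R_r(ACA)$, then invoke Theorem~\ref{thm1}. The paper's own proof is two lines and simply asserts the existence of these group inverses, implicitly relying on \cite[Theorem~2.1]{CL}, which states that over a B\'ezout domain $M^{\#}$ exists if and only if $R_r(M)=R_r(M^2)$. You take the same route but supply the missing details: you first reduce $R_r(A)=R_r(ABA)$ to $R_r(AB)=R_r((AB)^2)$ and $R_r(CA)=R_r((CA)^2)$ (exactly the computations the paper would need), and then you \emph{reprove} the criterion itself via a module-theoretic argument (purity of kernels, freeness of finitely generated torsion-free modules over a B\'ezout domain, and the fact that surjective endomorphisms of finitely generated modules are bijective). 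So your version is a self-contained expansion of the paper's terse proof rather than a different argument; the only cost is length, and the gain is that you do not need to look up \cite{CL}.
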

\begin{proof}
By $R_{r}(A)=R_{r}(ABA)$, $(AB)^{\#}$ exists. $R_{r}(A)=R_{r}(ACA)$ implies $(CA)^{\#}$ exists. Then $AB\sim CA$.
\end{proof}

\begin{cor}\label{cor2}
Let $A, B, C\in R^{n\times n}$ and $ABA=ACA$. If $R_{r}(A)=R_{r}(AB)$, $R_{r}(B)=R_{r}(BA)$, then $AB\sim CA$.
\end{cor}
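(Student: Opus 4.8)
The plan is to reduce Corollary~\ref{cor2} to Corollary~\ref{cor1} by showing that the two column-space hypotheses together force $R_{r}(A) = R_{r}(ABA)$. First I would translate each range condition into a matrix equation. Since $R_{r}(AB) \subseteq R_{r}(A)$ holds automatically (each column of $AB$ is an $R$-combination of the columns of $A$), the hypothesis $R_{r}(A) = R_{r}(AB)$ is equivalent to the inclusion $R_{r}(A) \subseteq R_{r}(AB)$, which means there is a matrix $U \in R^{n\times n}$ with $A = ABU$. In the same way, $R_{r}(B) = R_{r}(BA)$ yields a matrix $W \in R^{n\times n}$ with $B = BAW$. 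This step is purely definitional and does not use the B\'ezout hypothesis.

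Next I would combine the two equations. Left-multiplying $B = BAW$ by $A$ gives $AB = ABAW$. Substituting this into $A = ABU$ produces $A = ABAWU$, so every column of $A$ lies in $R_{r}(ABA)$; that is, $R_{r}(A) \subseteq R_{r}(ABA)$. The reverse inclusion is free, since $ABA = (AB)A$ forces $R_{r}(ABA) \subseteq R_{r}(AB) \subseteq R_{r}(A)$. Hence $R_{r}(A) = R_{r}(ABA)$.

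Finally, Corollary~\ref{cor1} applies verbatim to the data $A, B, C$ satisfying $ABA = ACA$ and $R_{r}(A) = R_{r}(ABA)$, and it delivers $AB \sim CA$. The only real decision in the argument is the order of substitution needed to merge the two separate range conditions into the single condition required by Corollary~\ref{cor1}; once the identity $AB = ABAW$ is isolated, the remaining manipulation is immediate, so I do not expect a genuine obstacle here.
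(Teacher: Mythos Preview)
Your proposal is correct and follows essentially the same route as the paper: both arguments reduce to Corollary~\ref{cor1} by establishing $R_{r}(A)=R_{r}(ABA)$ from the two given range equalities. The paper does this in one line via the chain $R_{r}(A)=R_{r}(AB)=AR_{r}(B)=AR_{r}(BA)=R_{r}(ABA)$, whereas you translate the range conditions into matrix equations $A=ABU$ and $B=BAW$ before combining them; this is a purely stylistic difference.
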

\begin{proof}
Since $R_{r}(A)=R_{r}(AB)=AR_{r}(B)=AR_{r}(BA)=R_{r}(ABA)$, by Corollary \ref{cor1}, $AB\sim CA$.
\end{proof}

\begin{thm}\label{thm2}
Let $A, B, C\in R^{n\times n}$ with $ABA=ACA$. If $R_{r}(AB)=R_{r}(ABA)$, $R_{r}(CA)=R_{r}(CAB)$, then $AB\sim CA$.
\end{thm}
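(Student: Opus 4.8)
The plan is to reduce Theorem \ref{thm2} to Theorem \ref{thm1}: once we know that both $AB$ and $CA$ are group invertible, the standing relation $ABA=ACA$ lets Theorem \ref{thm1} deliver $AB\sim CA$ at once. Since the only new information in Theorem \ref{thm2} is a pair of column-space conditions, the entire task is to manufacture group invertibility out of them. Over a Bézout domain the group inverse of $M$ exists exactly when the column space stabilizes, that is, when $R_{r}(M)=R_{r}(M^{2})$; this is the one-sided characterization from \cite{CL} that already underlies Corollary \ref{cor1} (indeed $R_{r}(A)=R_{r}(ABA)$ forces $R_{r}(AB)=R_{r}((AB)^{2})$). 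Consequently it suffices to prove $R_{r}(AB)=R_{r}((AB)^{2})$ and $R_{r}(CA)=R_{r}((CA)^{2})$.

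The engine for both equalities is the single observation that $ABA=ACA$ converts a square into a triple product. Multiplying $ABA=ACA$ on the left by $C$ gives $CABA=CACA=(CA)^{2}$, and multiplying it on the right by $B$ gives $ABAB=ACAB$, i.e. $(AB)^{2}=ACAB$. The second tool is purely formal: a fixed left multiplication $w\mapsto Mw$ sends equal subsets of $R^{n\times 1}$ to equal subsets and carries $R_{r}(X)$ onto $R_{r}(MX)$, so any column-space identity may be transported by left-multiplying through by a matrix.

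For $CA$ I would start from the first hypothesis $R_{r}(AB)=R_{r}(ABA)$ and left-multiply by $C$, obtaining $R_{r}(CAB)=R_{r}(CABA)=R_{r}((CA)^{2})$, where the last equality is the identity $CABA=(CA)^{2}$. Combining this with the second hypothesis $R_{r}(CA)=R_{r}(CAB)$ yields the chain $R_{r}(CA)=R_{r}(CAB)=R_{r}(CABA)=R_{r}((CA)^{2})$, so $(CA)^{\#}$ exists. For $AB$ I would run the dual computation: left-multiply the second hypothesis $R_{r}(CA)=R_{r}(CAB)$ by $A$ to get $R_{r}(ACA)=R_{r}(ACAB)$, then rewrite $ACA=ABA$ (the defining relation) and $ACAB=(AB)^{2}$, producing $R_{r}(ABA)=R_{r}((AB)^{2})$; the first hypothesis $R_{r}(AB)=R_{r}(ABA)$ then closes the chain $R_{r}(AB)=R_{r}(ABA)=R_{r}((AB)^{2})$, so $(AB)^{\#}$ exists. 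With both group inverses in hand, Theorem \ref{thm1} finishes the argument.

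The algebra above is genuinely routine; the one place that carries real weight is the appeal to the column-space characterization of group invertibility over a Bézout domain. Because the hypotheses are one-sided (only $R_{r}$, with no $R_{l}$ information), the proof cannot succeed unless the stabilization $R_{r}(M)=R_{r}(M^{2})$ by itself guarantees that $M^{\#}$ exists. Establishing, or correctly citing from \cite{CL}, that this one-sided criterion is valid over a Bézout domain is the crux; everything else is bookkeeping with the two identities $(CA)^{2}=CABA$ and $(AB)^{2}=ACAB$ and the transport of column-space equalities by left multiplication.
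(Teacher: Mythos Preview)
Your proposal is correct and follows essentially the same route as the paper: reduce to Theorem~\ref{thm1} by establishing $R_{r}(AB)=R_{r}((AB)^{2})$ and $R_{r}(CA)=R_{r}((CA)^{2})$ via left-multiplication of the two hypotheses and the identities $(AB)^{2}=ACAB$, $(CA)^{2}=CABA$, then invoke the one-sided criterion \cite[Theorem~2.1]{CL}. Your chains are in fact slightly tidier than the paper's (which for $CA$ passes through $R_{r}(CACAB)\subseteq R_{r}(CACA)$ after first deriving $R_{r}(AB)=R_{r}(ABAB)$), but the substance is identical.
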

\begin{proof}
By the column space of a matrix, we have the formulas
$R_{r}(AB)=R_{r}(ABA)=R_{r}(ACA)=AR_{r}(CA)=AR_{r}(CAB)=R_{r}(ACAB)=R_{r}(ABAB)$
and
$R_{r}(CA)=R_{r}(CAB)=CR_{r}(AB)=CR_{r}(ABAB)=CR_{r}(ACAB)=R_{r}(CACAB)\subseteq R_{r}(CACA)$.
Obviously, $R_{r}(CA)\supseteq R_{r}(CACA)$. by \cite[Theorem 2.1]{CL}, $AB$ and $CA$ are group invertible.
Then the conclusion follows from Theorem \ref{thm1}.
\end{proof}
As an immediate consequence, we have

\begin{cor}\label{cor3}
Let $A, B, C\in R^{n\times n}$ and $ABA=ACA$. If $R_{r}(A)=R_{r}(AC)=R_{r}(ABA)$, then $AC\sim BA$.
\end{cor}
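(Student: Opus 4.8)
The plan is to exploit the fact that the standing hypothesis $ABA=ACA$ is \emph{symmetric in $B$ and $C$}, and to reduce the statement to Corollary \ref{cor1} (equivalently Theorem \ref{thm1}) after interchanging the roles of $B$ and $C$. Indeed, the target similarity $AC\sim BA$ is exactly the conclusion of Corollary \ref{cor1}, written as $A\tilde B\sim \tilde C A$, for the triple $(A,\tilde B,\tilde C)=(A,C,B)$.

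First I would check that the hypotheses of Corollary \ref{cor1} survive this swap. For the triple $(A,C,B)$ the defining relation reads $A\cdot C\cdot A=A\cdot B\cdot A$, which is precisely the given $ABA=ACA$. The remaining hypothesis needed is $R_{r}(A)=R_{r}(ACA)$; but $ACA=ABA$ as matrices, so their column spaces coincide, and $R_{r}(ABA)=R_{r}(A)$ is part of the standing assumption. Hence $R_{r}(A)=R_{r}(ACA)$, and Corollary \ref{cor1} applied to $(A,C,B)$ delivers $A\cdot C\sim B\cdot A$, i.e.\ $AC\sim BA$.

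For completeness I would also record the more hands-on route, which uses the full chain $R_{r}(A)=R_{r}(AC)=R_{r}(ABA)$: one shows directly that $(AC)^{\#}$ and $(BA)^{\#}$ exist and then invokes Theorem \ref{thm1} with $B$ and $C$ interchanged. For $(AC)^{\#}$, using $R_{r}(A)=R_{r}(AC)$ to replace $A$ by $AC$ inside the column space gives $R_{r}(ACAC)=R_{r}(ACA)=R_{r}(ABA)=R_{r}(A)=R_{r}(AC)$. For $(BA)^{\#}$, since $R_{r}(BABA)=R_{r}\big(B(ABA)\big)$ depends on $ABA$ only through $R_{r}(ABA)=R_{r}(A)$, one gets $R_{r}(BABA)=R_{r}(BA)$. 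By \cite[Theorem 2.1]{CL} both group inverses exist, and Theorem \ref{thm1} applied to $(A,C,B)$ finishes the argument; note that this route is where the hypothesis $R_{r}(AC)=R_{r}(A)$ is actually consumed.

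The main obstacle is conceptual rather than computational: one must notice that $ABA=ACA$ is invariant under exchanging $B$ and $C$, so that $AC\sim BA$ is nothing but the earlier similarity statement with the two matrices swapped. Once this symmetry is spotted, the only verification required is that the column-space condition transforms correctly, and this is immediate because equal matrices have equal column spaces.
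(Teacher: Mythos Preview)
Your argument is correct and is essentially the intended reduction: swap the roles of $B$ and $C$ (the relation $ABA=ACA$ is symmetric) and apply Corollary~\ref{cor1}, noting that $R_r(A)=R_r(ABA)=R_r(ACA)$ is already part of the hypothesis. The paper's printed proof is in fact a verbatim copy of the proof of Corollary~\ref{cor2} (it starts from $R_r(A)=R_r(AB)$, which is not assumed here, and even concludes $AB\sim CA$ rather than $AC\sim BA$), so your write-up is actually cleaner; you also correctly observe that in your first route the extra hypothesis $R_r(A)=R_r(AC)$ is never used.
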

\begin{proof}
Since $R_{r}(A)=R_{r}(AB)=AR_{R}(B)=AR_{R}(BA)=R_{R}(ABA)$, by Corollary \ref{cor1}, $AB\sim CA$.
\end{proof}

\begin{exam}
Let
$A=\begin{bmatrix}
   1 &  1  \\
   0 & -1  \\
   \end{bmatrix}$,
$B=\begin{bmatrix}
   1  &  1   \\
   0  &  0   \\
   \end{bmatrix}$,
$C=\begin{bmatrix}
  1  &  -1   \\
  0  &   0   \\
  \end{bmatrix}$
$\in {\mathbb{C}}^{2\times 2}$, then $ABA=ACA$. Since $AB$ and $CA$ are idempotent, $(AB)^{\#}=AB$ and $(CA)^{\#}=CA$.
By virtue of Theorem \ref{thm1}, we have $AB\sim CA$. In fact,
$AB=P(CA)P^{-1}$, where
$P=\begin{bmatrix}
     1 & 1 \\
     0 & 1 \\
   \end{bmatrix}$.
\end{exam}

\section{Extensions}

In this section, we turn to the similarity of group inverses. We have
\begin{thm}\label{thm2}
Let $R$ be a B\'ezout domain, and let $A,B,C\in R^{n\times n}$ with $ABA=ACA$. If $AB$ and $CA$ are group invertible,
then $(AB)^{\#}$ is similar to $(CA)^{\#}$.
\end{thm}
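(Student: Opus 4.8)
The plan is to reduce everything to Theorem \ref{thm1} together with the elementary observation that group inversion is compatible with similarity, so that the conjugating matrix produced in Theorem \ref{thm1} automatically transports one group inverse to the other. First I would invoke Theorem \ref{thm1}: since $ABA=ACA$ and both $AB$ and $CA$ are group invertible, there exists an invertible matrix $S\in R^{n\times n}$ with $AB=S^{-1}(CA)S$. Thus the entire difficulty of relating the two products has already been absorbed into the previous theorem, and what remains is purely formal.

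Next I would record the following general fact and use it as the engine of the proof: if $a,b\in R^{n\times n}$ with $b$ group invertible and $a=s^{-1}bs$ for some invertible $s$, then $a$ is group invertible with $a^{\#}=s^{-1}b^{\#}s$. To establish this I set $x=s^{-1}b^{\#}s$ and verify the three defining identities of the group inverse. Using $bb^{\#}=b^{\#}b$ one obtains $ax=s^{-1}bb^{\#}s=s^{-1}b^{\#}bs=xa$, then $xax=s^{-1}b^{\#}bb^{\#}s=s^{-1}b^{\#}s=x$, and finally $axa=s^{-1}bb^{\#}bs=s^{-1}bs=a$. Since the group inverse is unique whenever it exists, these three identities force $x=a^{\#}$, giving the claimed formula $a^{\#}=s^{-1}b^{\#}s$.

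Applying this fact with $a=AB$, $b=CA$ and $s=S$ yields $(AB)^{\#}=S^{-1}(CA)^{\#}S$, which is precisely the assertion $(AB)^{\#}\sim(CA)^{\#}$. I do not anticipate any genuine obstacle here: the substantive work of producing a similarity between $AB$ and $CA$ is carried out in Theorem \ref{thm1}, and the only point needing (entirely routine) care is the verification of the three group-inverse equations followed by an appeal to uniqueness. Should a fully self-contained argument be preferred, one could instead track the explicit conjugating matrices assembled in the proof of Theorem \ref{thm1} and compute $(AB)^{\#}$ and $(CA)^{\#}$ blockwise, but this would merely re-derive the same conclusion at the cost of considerably more bookkeeping.
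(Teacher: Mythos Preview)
Your proposal is correct, and in fact it is cleaner than the paper's own argument. The paper does precisely what you describe in your final sentence as the more laborious alternative: it revisits the explicit block decompositions built in the proof of Theorem~\ref{thm1}, writes $AB=H\,\mathrm{diag}(C_4D,0)\,H^{-1}$ and $CA=K\,\mathrm{diag}(D'C'_4,0)\,K^{-1}$ with $C_4D$ and $D'C'_4$ invertible, computes the group inverses blockwise as $H\,\mathrm{diag}((C_4D)^{-1},0)\,H^{-1}$ and $K\,\mathrm{diag}((D'C'_4)^{-1},0)\,K^{-1}$, and then appeals to the similarity $\mathrm{diag}(C_4D,0)\sim\mathrm{diag}(D'C'_4,0)$ already established in Theorem~\ref{thm1}. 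Your route bypasses all of this by invoking only the \emph{conclusion} of Theorem~\ref{thm1} (namely $AB\sim CA$) together with the purely formal observation that conjugation by an invertible element sends a group inverse to a group inverse. The paper's approach has the minor advantage of making the group inverses explicit in block form (which it then reuses in Corollaries~\ref{cor5} and~\ref{cor7}), but as a proof of the stated theorem your argument is shorter, more transparent, and entirely rigorous.
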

\noindent
\begin{proof}
As in the proof of Theorem \ref{thm1},
$$A_1
\begin{bmatrix}
B_1 & B_2 \\
0   & 0   \\
\end{bmatrix}
=S^{-1}
\begin{bmatrix}
I  &  0   \\
G  &  I   \\
\end{bmatrix}
\begin{bmatrix}
C_4 D & 0   \\
0     & 0   \\
\end{bmatrix}
\begin{bmatrix}
 I  &  0   \\
-G  &  I   \\
\end{bmatrix}S,$$
similarly, there exists $G'$ such that
$$\begin{bmatrix}
B'_1 & B'_2 \\
 0   & 0   \\
\end{bmatrix}A'_1
=
R'
\begin{bmatrix}
I    &    -G'   \\
0    &    I     \\
\end{bmatrix}
\begin{bmatrix}
D'C'_4 & 0   \\
0      & 0   \\
\end{bmatrix}
\begin{bmatrix}
I    &    G'    \\
0    &    I     \\
\end{bmatrix}
R'^{-1}.$$
So
$$AB=H
\begin{bmatrix}
C_4 D & 0   \\
0     & 0   \\
\end{bmatrix}
H^{-1} \ {\text{and}} \
CA=K
\begin{bmatrix}
D'C'_4 & 0   \\
0      & 0   \\
\end{bmatrix}
K^{-1}$$
where
$$H=P
\begin{bmatrix}
 I  &  -E   \\
 0  &   I   \\
\end{bmatrix}
\begin{bmatrix}
 S^{-1}\begin{bmatrix}
             I  &   0   \\
             G  &   I   \\
       \end{bmatrix}  &               \\
                      &      I        \\
\end{bmatrix}$$
and
$$K=Q'^{-1}
\begin{bmatrix}
I    &    0    \\
F    &    I     \\
\end{bmatrix}
\begin{bmatrix}
R'\begin{bmatrix}
I    &    -G'    \\
0    &    I      \\
\end{bmatrix}          &          \\
                       &    I     \\
\end{bmatrix}.$$
It is easy to check that $AB$ and $CA$ are group invertible, and
$$(AB)^{\#}=H
\begin{bmatrix}
D^{-1}C_{4}^{-1} & 0   \\
0                & 0   \\
\end{bmatrix}
H^{-1},$$
$$(CA)^{\#}=K
\begin{bmatrix}
{C'_{4}}^{-1}{D'}^{-1} & 0   \\
0                  & 0   \\
\end{bmatrix}
K^{-1}.$$
In view of the similarity of
$\begin{bmatrix}
C_4D   &    0   \\
0      &    0   \\
\end{bmatrix}$
 and
$\begin{bmatrix}
D'C'_4   &    0   \\
0        &    0   \\
\end{bmatrix}$
which has been proved in Theorem \ref{thm1}, we derive that $(AB)^{\#}$ and $(CA)^{\#}$ are similar.
\end{proof}

\begin{cor}\label{cor4}
Let $A, B, C\in \mathbb{C}^{n\times n}$ with $ABA=ACA$. If $(AC)^{\#}$ and $(BA)^{\#}$ exist,
then $(AC)^{\#}$ is similar to $(BA)^{\#}$.
\end{cor}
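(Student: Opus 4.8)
The plan is to obtain Corollary \ref{cor4} as an immediate specialization of Theorem \ref{thm2}, with no new computation required beyond a careful relabeling of the data. First I would record the (trivial but necessary) observation that the hypotheses of Theorem \ref{thm2} are stated over an arbitrary B\'ezout domain $R$, and that the field $\mathbb{C}$ is a B\'ezout domain: every ideal of a field is either $\{0\}$ or all of $\mathbb{C}$, hence principal, so in particular every finitely generated ideal is principal. Thus Theorem \ref{thm2} is applicable to matrices over $\mathbb{C}$, and the problem reduces to matching the corollary's data against the theorem's hypotheses.

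The key point is that the defining relation $ABA=ACA$ is symmetric under interchanging the roles of $B$ and $C$. I would therefore apply Theorem \ref{thm2} not to the triple $(A,B,C)$ but to the triple $(A,C,B)$. Writing out what the theorem asserts for this substituted triple: the required identity ``$A(C)A=A(B)A$'' is exactly our given identity $ABA=ACA$; the two products the theorem compares are $A\cdot C=AC$ and $B\cdot A=BA$, which are group invertible precisely because $(AC)^{\#}$ and $(BA)^{\#}$ are assumed to exist; and the conclusion the theorem delivers is that $(AC)^{\#}$ is similar to $(BA)^{\#}$. This is verbatim the statement of the corollary, so transitivity/bookkeeping alone closes the argument.

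Since the argument is a direct invocation of Theorem \ref{thm2}, there is essentially no obstacle of substance; the only thing that must genuinely be checked is that the substituted data satisfy every hypothesis of the theorem, namely that $ABA=ACA$ survives the interchange of $B$ and $C$ (it does, verbatim) and that the two products singled out by the theorem are indeed those whose group inverses we are assuming to exist. I would flag, as the single place where a misreading could creep in, the bookkeeping of which factor plays the role of the ``outer'' matrix $A$ and which products ($AB$ versus $CA$) the theorem compares, so that the relabeling $(A,B,C)\mapsto(A,C,B)$ is aligned correctly with the target pair $(AC,BA)$. One might also be tempted to route the proof through the $AB\sim BA$--type results of Cao and Li, but the cleanest path is this direct specialization.
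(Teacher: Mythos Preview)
Your argument is correct and matches the paper's own proof, which simply says ``This is obvious by Theorem \ref{thm2}.'' You have merely made explicit the relabeling $(A,B,C)\mapsto(A,C,B)$ and the fact that $\mathbb{C}$ is a B\'ezout domain, both of which the paper leaves implicit.
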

\begin{proof}
This is obvious by Theorem \ref{thm2}.
\end{proof}

\begin{cor}\label{cor5}
Let $R$ be a B\'ezout domain, and let $A,B,C\in R^{n\times n}$ with $ABA=ACA$. If $AB$ and $CA$ are group invertible,
then $(AB)(AB)^{\#}$ is similar to $(CA)(CA)^{\#}$.
\end{cor}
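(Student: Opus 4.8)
The plan is to recycle the explicit block decompositions built in the proof of Theorem~\ref{thm2} rather than start from scratch. There one produces invertible matrices $H,K$ with
$$AB=H\begin{bmatrix} C_4 D & 0 \\ 0 & 0 \end{bmatrix}H^{-1}, \quad CA=K\begin{bmatrix} D'C'_4 & 0 \\ 0 & 0 \end{bmatrix}K^{-1},$$
where $C_4D\in R^{r_1\times r_1}$ and $D'C'_4\in R^{s_1\times s_1}$ are invertible, together with the group inverses
$$(AB)^{\#}=H\begin{bmatrix} D^{-1}C_4^{-1} & 0 \\ 0 & 0 \end{bmatrix}H^{-1}, \quad (CA)^{\#}=K\begin{bmatrix} {C'_4}^{-1}{D'}^{-1} & 0 \\ 0 & 0 \end{bmatrix}K^{-1}.$$
First I would form the two spectral idempotents directly. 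Multiplying the block for $AB$ by the block for $(AB)^{\#}$, the invertible corner $C_4D$ cancels against its inverse $D^{-1}C_4^{-1}$, so that $(AB)(AB)^{\#}=H\begin{bmatrix} I_{r_1} & 0 \\ 0 & 0 \end{bmatrix}H^{-1}$, and symmetrically $(CA)(CA)^{\#}=K\begin{bmatrix} I_{s_1} & 0 \\ 0 & 0 \end{bmatrix}K^{-1}$. Thus each of the two matrices in question is conjugate to a standard diagonal idempotent whose rank is the size of the invertible corner.

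The one genuine point to settle—and the step I expect to be the main obstacle—is that the two corner sizes agree, i.e. $r_1=s_1$; this is a bookkeeping matter that I would resolve by appealing to Theorem~\ref{thm1} rather than by any fresh computation. In the course of that proof it was established that $C_4D\sim D'C'_4$. Since similar matrices must share the same format, and $C_4D$ is $r_1\times r_1$ while $D'C'_4$ is $s_1\times s_1$, the similarity forces $r_1=s_1$. Equivalently, $r_1=rank(C_4D)$ and $s_1=rank(D'C'_4)$ are the ranks of the two idempotents, which the similarity of the corners preserves.

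With $r_1=s_1=:m$ the two padded idempotents coincide: both equal the single $n\times n$ matrix $J=\begin{bmatrix} I_m & 0 \\ 0 & 0 \end{bmatrix}$. Hence $(AB)(AB)^{\#}=HJH^{-1}$ and $(CA)(CA)^{\#}=KJK^{-1}$, so eliminating $J$ gives
$$(AB)(AB)^{\#}=(HK^{-1})\,(CA)(CA)^{\#}\,(HK^{-1})^{-1},$$
exhibiting $HK^{-1}$ as the required invertible intertwiner and proving $(AB)(AB)^{\#}\sim(CA)(CA)^{\#}$. No deeper structural difficulty arises beyond the size-matching argument; once $r_1=s_1$ is in hand the conclusion is immediate.
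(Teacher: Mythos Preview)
Your proposal is correct and follows essentially the same route as the paper: multiply the block forms from the proof of Theorem~\ref{thm2} to get $(AB)(AB)^{\#}=H\,\mathrm{diag}(I,0)\,H^{-1}$ and $(CA)(CA)^{\#}=K\,\mathrm{diag}(I,0)\,K^{-1}$, and conclude similarity. Your explicit verification that $r_1=s_1$ via the similarity $C_4D\sim D'C'_4$ established in Theorem~\ref{thm1} is a detail the paper leaves tacit but is exactly the right way to close that gap.
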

\noindent
\begin{proof}
From the proof of Theorem \ref{thm2},
$$\begin{array}{rcl}
\vspace{2mm}
AB(AB)^{\#}   &   =   &
H\begin{bmatrix}
  C_4D   &          \\
         &      0   \\
  \end{bmatrix}H^{-1}H
  \begin{bmatrix}
  (C_4D)^{-1}   &          \\
                &      0   \\
  \end{bmatrix}H^{-1}          \\
   &   =   &
H\begin{bmatrix}
  I      &          \\
         &      0   \\
  \end{bmatrix}H^{-1},
\end{array}$$
$$\begin{array}{rcl}
\vspace{2mm}
CA(CA)^{\#}    &    =    &
K\begin{bmatrix}
  D'C'_4   &          \\
             &      0   \\
  \end{bmatrix}K^{-1}K
  \begin{bmatrix}
  (D'C'_4)^{-1}   &          \\
                    &      0   \\
  \end{bmatrix}K^{-1}               \\
  &   =    &
K\begin{bmatrix}
  I      &          \\
         &      0   \\
  \end{bmatrix}K^{-1}.
\end{array}$$
This implies that  $(AB)(AB)^{\#}$ and $(CA)(CA)^{\#}$ are similar.
\end{proof}

Cline proved that $ba$ is Drazin invertible if $ab$ has Drazin inverse. In this case, $(ba)^D=b[(ab)^D]^2a$. We now derive

\begin{thm}\label{thm3}
Let $R$ be a B\'ezout domain, and let $A,B,C\in R^{n\times n}$ with $ABA=ACA$. If $AB$ has Drazin inverse, then there exists $k\in {\Bbb N}$ such that
$(AB)^s$ is similar to $(CA)^s$ for any $s\geq k$.
\end{thm}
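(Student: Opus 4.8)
The plan is to deduce the statement from Theorem \ref{thm1} by exhibiting, for each sufficiently large $s$, a triple that realizes $(AB)^s$ and $(CA)^s$ as the two products occurring there. Concretely, for a fixed $s$ I would set $\widetilde A=A$, $\widetilde B=(BA)^{s-1}B$ and $\widetilde C=C(AC)^{s-1}$. Plain regrouping gives $\widetilde A\widetilde B=A(BA)^{s-1}B=(AB)^s$ and $\widetilde C\widetilde A=C(AC)^{s-1}A=(CA)^s$, so once the hypotheses of Theorem \ref{thm1} are verified for $\widetilde A,\widetilde B,\widetilde C$, the similarity $(AB)^s\sim(CA)^s$ is immediate. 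Thus the whole argument reduces to two points: the identity $\widetilde A\widetilde B\widetilde A=\widetilde A\widetilde C\widetilde A$, and the group invertibility of both $(AB)^s$ and $(CA)^s$ for all large $s$.

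First I would record the identities forced by $ABA=ACA$. A short induction gives $(AB)^sA=A(CA)^s$ for all $s\ge 1$: the base case is $ABA=ACA$, and $(AB)^{s+1}A=AB\,(AB)^sA=AB\,A(CA)^s=(ABA)(CA)^s=(ACA)(CA)^s=A(CA)^{s+1}$. Since also $(AC)^sA=A(CA)^s$ by regrouping, I obtain $\widetilde A\widetilde B\widetilde A=(AB)^sA=A(CA)^s=\widetilde A\widetilde C\widetilde A$, which settles the first point. For the powers of $AB$, writing $k_1=\mathrm{ind}(AB)$, the column spaces $R_r((AB)^m)$ stabilize for $m\ge k_1$; hence $R_r((AB)^s)=R_r((AB)^{2s})$ for every $s\ge k_1$, and \cite[Theorem 2.1]{CL} shows that $(AB)^s$ is group invertible for all such $s$.

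The main obstacle is the group invertibility of $(CA)^s$, and the key is to place the varying factor on the \emph{right}. Using $ABA=ACA$ one checks $(BA)(CA)=(BA)^2$ and $(CA)^2=(CA)(BA)$, whence by induction $(CA)^s=(CA)(BA)^{s-1}$ for all $s\ge 1$. Now Cline's result quoted above applies: since $AB$ is Drazin invertible, so is $BA$; let $k_2=\mathrm{ind}(BA)$. For $s-1\ge k_2$ the column spaces satisfy $R_r((BA)^{s-1})=R_r((BA)^s)$, so $(BA)^{s-1}=(BA)^sY$ for some $Y$, and therefore $(CA)^s=(CA)(BA)^{s-1}=(CA)(BA)^sY=(CA)^{s+1}Y$. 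Combined with the trivial inclusion $R_r((CA)^{s+1})\subseteq R_r((CA)^s)$, this yields $R_r((CA)^s)=R_r((CA)^{s+1})$ for all $s\ge k_2+1$, hence $R_r((CA)^s)=R_r((CA)^{2s})$, and \cite[Theorem 2.1]{CL} shows $(CA)^s$ is group invertible once $s\ge k_2+1$.

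Finally I would set $k=\max\{k_1,k_2+1\}$. For every $s\ge k$ the triple $\widetilde A,\widetilde B,\widetilde C$ satisfies $\widetilde A\widetilde B\widetilde A=\widetilde A\widetilde C\widetilde A$ and has both $\widetilde A\widetilde B=(AB)^s$ and $\widetilde C\widetilde A=(CA)^s$ group invertible, so Theorem \ref{thm1} gives $(AB)^s=\widetilde A\widetilde B\sim\widetilde C\widetilde A=(CA)^s$, as required. The delicate step is the one in the previous paragraph: representing $(CA)^s$ with the growing power $(BA)^{s-1}$ on the right, rather than as $C(AB)^{s-1}A$ where the growing factor sits on the left, is exactly what lets the stabilization of the column spaces $R_r((BA)^m)$ — guaranteed by Cline's theorem — transfer to the column spaces $R_r((CA)^s)$, since column-space containments are preserved under right, but not left, multiplication.
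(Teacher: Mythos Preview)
Your proof is correct and follows the paper's overall strategy: for each large $s$, realize $(AB)^s$ and $(CA)^s$ as $AB'$ and $C'A$ for a triple satisfying $AB'A=AC'A$, check both products are group invertible, and invoke Theorem~\ref{thm1}. Your choice $\widetilde B=(BA)^{s-1}B$, $\widetilde C=C(AC)^{s-1}$ is the same (up to regrouping) as the paper's $B'=B(AB)^{s-1}$, $C'=(CA)^{s-1}C$, and the inductive identity $(AB)^sA=A(CA)^s$ is used in both.

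The only substantive difference is how the group invertibility of $(CA)^s$ is obtained. The paper simply quotes \cite[Theorem~2.7]{ZZ1} (a generalized Cline formula for the setting $ABA=ACA$) to conclude that $CA$ itself is Drazin invertible with $\mathrm{ind}(CA)\le \mathrm{ind}(AB)+1$, and then uses the standard fact that powers past the index are group invertible. You instead prove the identity $(CA)^s=(CA)(BA)^{s-1}$ directly from $ABA=ACA$, apply the classical Cline result to pass from $AB$ to $BA$, and then push the column-space stabilization of $(BA)^m$ through to $(CA)^s$ via right multiplication. Your argument is more self-contained and in effect reproves the relevant part of \cite{ZZ1} by elementary means; the paper's version is shorter because it outsources that step to the literature. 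Either way the resulting threshold is $k=\mathrm{ind}(AB)+1$, matching your $\max\{k_1,k_2+1\}$ since $k_2\le k_1+1$ and $k_1\le k_2+1$ by Cline.
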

\begin{proof}
Suppose that $AB$ has Drazin inverse with $ind(AB)=k$.
By \cite[Theorem 2.7]{ZZ1},
$CA$ is Drazin invertible with $ind(CA)\leq k+1$, and $(CA)^D=C[(AB)^{D}]^2 A$.    \\
\indent
Set $s>k$. Since $AB(AB)^D$ is idempotent, we have
$$(AB)^s [(AB)^D]^s=AB(AB)^D=[(AB)^D]^s (AB)^s,$$
$$([(AB)^D]^s)^2(AB)^s=[(AB)^D]^{s+1}AB=[(AB)^D]^{s},$$
$$[(AB)^{s}]^2[(AB)^D]^s=(AB)^{s+1}(AB)^D=(AB)^{s}.$$
Accordingly, $(AB)^s$ is group invertible and $[(AB)^s]^{\#}=[(AB)^D]^s$. Similarly, $(CA)^s$ is group invertible and $[(CA)^s]^{\#}=[(CA)^D]^s$.    \\
\indent
Let $B'=B(AB)^{s-1}$ and $C'=(CA)^{s-1}C$. Then $AB'A=AC'A$, where $AB'$ and $C'A$ are group invertible.
Therefore, the result of theorem follows by Theorem \ref{thm1}.
\end{proof}

\begin{cor}\label{cor6}
Let $A,B\in M_{n}(\mathbb{C})$. Then there exists $k\in {\Bbb N}$ such that
$(AB)^s$ is similar to $(BA)^s$ for any $s\geq k$.
\end{cor}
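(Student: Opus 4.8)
The plan is to deduce Corollary~\ref{cor6} as the special case of Theorem~\ref{thm3} obtained by setting $C=B$ and working over the field $\mathbb{C}$. First I would observe that $\mathbb{C}$ is a field, hence trivially a B\'ezout domain, so the hypotheses of Theorem~\ref{thm3} are available for matrices in $M_n(\mathbb{C})=\mathbb{C}^{n\times n}$. Taking $C=B$ makes the defining identity $ABA=ACA$ read $ABA=ABA$, which holds automatically; thus no side condition on $A$ and $B$ needs to be imposed. The only remaining hypothesis of Theorem~\ref{thm3} is that $AB$ have a Drazin inverse, and over $\mathbb{C}$ every square matrix is Drazin invertible (the Drazin index is the size of the largest nilpotent Jordan block, which is finite), so this is also free.

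Next I would apply Theorem~\ref{thm3} directly. With $C=B$, that theorem produces an index $k\in\Bbb{N}$ (namely $k=\mathrm{ind}(AB)$) such that $(AB)^s$ is similar to $(CA)^s=(BA)^s$ for every $s\geq k$. Since $C=B$ gives $CA=BA$, this is precisely the assertion of the corollary. So the corollary is a one-line specialization: invoke Theorem~\ref{thm3} with $C=B$ over the field $\mathbb{C}$.

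The only point that requires a word of care is verifying that the two genuinely nontrivial hypotheses of Theorem~\ref{thm3} are met in this setting, rather than the deduction itself. I would note explicitly that (i) $\mathbb{C}$ being a field is a B\'ezout domain, and (ii) Drazin invertibility of $AB$ is automatic over $\mathbb{C}$ because finite-dimensional complex matrices always admit a finite Drazin index. Neither of these is an obstacle; there is no hard step here, merely the bookkeeping of matching notation ($C=B$, $R=\mathbb{C}$) and confirming the field case falls under the B\'ezout-domain framework. Once that is recorded, the similarity $(AB)^s\sim(BA)^s$ for all $s\geq k$ follows immediately.
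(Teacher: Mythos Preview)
Your proposal is correct and matches the paper's own proof, which simply says ``choose $B=C$ in Theorem~\ref{thm3}''; you have merely spelled out why the hypotheses (B\'ezout domain, Drazin invertibility) are automatic over $\mathbb{C}$. One small quibble: the parenthetical ``namely $k=\mathrm{ind}(AB)$'' slightly overstates what Theorem~\ref{thm3} actually delivers (its proof takes $s>\mathrm{ind}(AB)$, so the guaranteed $k$ is $\mathrm{ind}(AB)+1$), but this does not affect the argument.
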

\begin{proof} This is obvious by choosing $B=C$ in Theorem \ref{thm3}.\end{proof}

\begin{cor}\label{cor7}
Let $R$ be a B\'ezout domain, and let $A,B,C\in R^{n\times n}$ with $ABA=ACA$. If $AB$ and $CA$ are group invertible,
then $(AB)^2(AB)^{D}$ is similar to $(CA)^2(CA)^{D}$.
\end{cor}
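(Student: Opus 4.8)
The plan is to recognize that, under the stated hypotheses, the two ``core'' expressions $(AB)^2(AB)^D$ and $(CA)^2(CA)^D$ collapse to $AB$ and $CA$ themselves, after which the claim is nothing more than Theorem \ref{thm1}. First I would record that, because $AB$ and $CA$ are assumed group invertible, their Drazin indices equal $1$; consequently $(AB)^D=(AB)^{\#}$ and $(CA)^D=(CA)^{\#}$, and the defining Drazin relation $a^{k+1}a^D=a^{k}$ specializes, at $k=1$, to the identity $a^{2}a^D=a$.

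Applying this identity to $a=AB$ and to $a=CA$ gives $(AB)^{2}(AB)^D=AB$ and $(CA)^{2}(CA)^D=CA$. Thus the quantities to be compared are literally $AB$ and $CA$, so the desired similarity $(AB)^{2}(AB)^D\sim(CA)^{2}(CA)^D$ is exactly the conclusion $AB\sim CA$ already established in Theorem \ref{thm1}. If one prefers a concrete verification over the abstract Drazin identity, I would instead reuse the explicit diagonalizations produced in the proof of Theorem \ref{thm2}, where $AB=H\,diag(C_4D,0)\,H^{-1}$ and $(AB)^{\#}=H\,diag((C_4D)^{-1},0)\,H^{-1}$ with $C_4D$ invertible; multiplying yields $(AB)^{2}(AB)^{\#}=H\,diag(C_4D,0)\,H^{-1}=AB$, and the identical manipulation with $K$ and $D'C'_4$ gives $(CA)^{2}(CA)^{\#}=CA$.

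Since the simplification $a^{2}a^D=a$ is forced by group invertibility, there is essentially no genuine obstacle in this corollary: all of the real work has already been absorbed into Theorem \ref{thm1}, namely the similarity of the blocks $C_4D$ and $D'C'_4$. The only point I would state with care is the passage from $(AB)^D$ to $(AB)^{\#}$ (and likewise for $CA$), which is legitimate precisely because the index is $1$.
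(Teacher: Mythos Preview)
Your proposal is correct. Your ``concrete'' alternative---multiplying the block forms $AB=H\,\mathrm{diag}(C_4D,0)\,H^{-1}$ and $(AB)^{\#}=H\,\mathrm{diag}((C_4D)^{-1},0)\,H^{-1}$ to get $(AB)^2(AB)^{\#}=H\,\mathrm{diag}(C_4D,0)\,H^{-1}$, and likewise for $CA$, then invoking the similarity of $\mathrm{diag}(C_4D,0)$ and $\mathrm{diag}(D'C'_4,0)$ from Theorem~\ref{thm1}---is exactly the route the paper takes.

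Your primary argument is a genuine streamlining, though: you observe up front that group invertibility gives $(AB)^2(AB)^D=(AB)^2(AB)^{\#}=AB$ and $(CA)^2(CA)^D=CA$, so the corollary collapses immediately to Theorem~\ref{thm1} with no need to revisit the block decompositions. The paper's computation arrives at the same identity (since $H\,\mathrm{diag}(C_4D,0)\,H^{-1}$ \emph{is} $AB$) but never states it, instead re-deriving the similarity through the explicit forms. Your version makes transparent that this corollary has no content beyond Theorem~\ref{thm1}; the paper's version has the minor advantage of being self-contained within the block-matrix framework already set up, but at the cost of obscuring how trivial the reduction is.
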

\begin{proof}
Since AB and CA are group invertible, $(AB)^D=(AB)^{\#}, (CA)^D=(CA)^{\#}$. By the proof of Corollary \ref{cor5},
$$(AB)^2(AB)^D =
H\begin{bmatrix}
  C_4D   &          \\
         &      0   \\
  \end{bmatrix}H^{-1},$$
$$(CA)^2(CA)^D   =
K\begin{bmatrix}
  D'C'_4   &          \\
             &      0   \\
  \end{bmatrix}K^{-1}.$$
According to the proof of Theorem \ref{thm1},
$\begin{bmatrix}
C_4D   &    0   \\
0      &    0   \\
\end{bmatrix}$
 and
$\begin{bmatrix}
D'C'_4   &    0   \\
0        &    0
\end{bmatrix}$
are similar. Then the similarity of $(AB)^2(AB)^D$ and $(CA)^2(CA)^D$ follows.
\end{proof}

\vskip10mm

\end{document}